\newtheorem{prop}{Proposition}
\newtheorem{nt}{Remark}
\newtheorem{Th}{Theorem}
\newtheorem{lemma}{Lemma}
\newtheorem{example}{Example}
\newfont{\ssdbl}{msbm8}
\newfont{\sdbl}{msbm9}
\newfont{\dbl}{msbm10 at 12pt}
\newcommand{\eqdef}{\stackrel{\rm def}{=}}
\newcommand{\oo}{{\cal O}}
\newcommand{\ff}{{\cal F}}
\newcommand{\Spec}{\mathop {\rm Spec}}
\newcommand{\Frac}{\mathop {\rm Frac}}
\newcommand{\da}{\mathbb{A}}
\newcommand{\dz}{\mathbb{Z}}
\newcommand{\dr}{\mathbb{R}}
\newcommand{\Z}{\dz}
\newcommand{\sdz}{\dz}
\newcommand{\Ker}{{\rm Ker}\:}
\newcommand{\Image}{{\rm Im}\:}
\newcommand{\lrto}{\longrightarrow}
\newcommand{\df}{\mathbb{F}}
\newcommand{\cF}{\hat {\cal F}}
\def\Z{{\mathbb Z}}
\def\Q{{\mathbb Q}}
\newcommand{\nsubset}{\not{\subset}}
\newcommand{\pro}{\mathop{\rm pr}}
\newcommand{\mh}{{\mathcal H}}
\begin{document}

\author{
D. V. Osipov
}

\title{On adelic quotient group for algebraic surface
\footnotetext{This work is supported by the Russian Science Foundation  under  grant 14-50-00005.}
}
\date{}

\maketitle

\begin{abstract}
We calculate explicitly an adelic quotient group for an excellent Noetherian  normal integral  two-dimensional separated scheme. An application to an irreducible normal projective algebraic surface over a field is given.
\end{abstract}

\section{Introduction}
Let $K$ be a global field or the field of rational functions of an algebraic curve over a field  $k$, and $\da_K$ be the group of adeles of the field $K$.  Then it is well-known that $\da_K/K$ is a compact topological space in the number theory case and  a  linearly compact $k$-vector space in  the geometric case, see, e.g.,~\cite{C} and Section~\ref{Sec1} below. Moreover, the strong approximation theorem implies the following exact sequence, which we write in the simplest case  $K= \Q$:
\begin{equation}  \label{quot_Q}
0 \lrto \widehat{\dz}  \lrto \da_{\Q}/ \Q  \lrto \dr/\dz  \lrto 0  \, \mbox{.}
\end{equation}

The goal of this note is to calculate explicitly, similarly to formula~\eqref{quot_Q}, an adelic quotient group
\begin{equation}  \label{ad_quot}
\da_X(\ff) / \left( \da_{X, 01}(\ff)  + \da_{X, 02}(\ff)  \right)  \, \mbox{,}
\end{equation}
where $X$ is an
excellent Noetherian (e.g., finite type over $k$ or over $\Z$) normal integral  two-dimensional separated scheme $X$,
$\da_X(\ff)$ is the group of higher adeles of a locally free sheaf
$\ff$ on $X$, the subgroup $\da_{X, 01}(\ff)  + \da_{X, 02}(\ff)$ is an analog of the above subgroup $K \subset \da_K$. Instead of fixing a valuation as, for example, the  Archimedean valuation in formula~\eqref{quot_Q} (or a point on a connected projective curve)
we fix a reduced one-dimensional closed equidimensional subscheme $C$
of the scheme~$X$ such that the open subscheme $X \setminus C$ is affine.

Higher adeles were introduced by A.~N.~Parshin in~\cite{P3}  for the case of a smooth projective surface over a field, and by A.~A.~Beilinson in a short note~\cite{B}
   (which did not contain proofs) for the case of arbitrary Noetherian schemes. Later the proofs of Beilinson's results on higher adeles appeared in~\cite{H}. A survey of higher adeles is contained also in~\cite{Osi}.

The main goal  in the higher adeles program is the possible arithmetic applications for the study of  zeta- and   $L$-functions  of  arithmetic  schemes, see the excellent survey of A.~N.~Parshin~\cite{P4}.

Recently there were also found  the interesting  connections and  interactions with many questions  between the  higher adeles  theory  and the Langlands program, see~\cite{P5}.

As an application, we deduce from our calculations of adelic quotient group~\eqref{ad_quot} the corresponding quotient group when $X$ is a projective irreducible normal surface over a field $k$, and $C$ is the support of an ample divisor. As in the one-dimensional case, this quotient group will be a linearly compact $k$-vector space.
We note that this quotient group when the surface $X$ is smooth and the sheaf $\ff = \oo_X$ was calculated in~\cite[\S~14]{OsipPar2}. But there were the gaps in the proof of Theorem~3 from~\cite{OsipPar2}, see Remark~\ref{gap} below.

Another application of the calculation of adelic quotient group~\eqref{ad_quot} for two-dimensional schemes will be given in the subsequent paper~\cite{Osip1}.
It will be  calculated  explicitly an
adelic quotient group on an arithmetic surface when the fibres over Archimedean points are taken into account.

This note is organized as follows. In Section~\ref{Sec1} we recall the one-dimensional case.
In Section~\ref{ad_form} we recall the basic notation for adeles on two-dimensional normal excellent integral separated Noetherian schemes.
In Section~\ref{sur_map} we construct a surjective map from the adelic quotient group~\ref{ad_quot}.
In Section~\ref{calc_ker} we calculate the kernel of this map.  We obtain in this section  Theorem~\ref{ex-seq} with calculation of adelic quotient group~\eqref{ad_quot}. In Section~\ref{proj-sur} we apply above calculations to the case of projective normal algebraic surface, see Theorem~\ref{anal-II}.

I am grateful to A.~N.~Parshin, because our joint paper~\cite{OsipPar2} led to this note. I am grateful to A.~B.~Zheglov, who provided me the reference~\cite{BD}.

\section{One-dimensional case}  \label{Sec1}

To better understand the two-dimensional case we first recall the one-dimensional case.

Let $D$ be an irreducible   algebraic curve over a field $k$. Let $\eta $ be the generic point of $D$,
and $\ff$ be  a locally free sheaf of finite rank on   $D$. We fix a  (closed) point $p$ on the curve $D$.

For any    point $q$ on $D$ let $\hat{\oo}_q$ be the  completion of the local ring $\oo_q$ of the point $q$,  $\cF_q$ be the completion of the stalk $\ff$ at $q$, and $K_q$ be the localization of the ring $\hat{\oo}_q$ with respect to the multiplicative system $\oo_q \setminus 0$.

Let $j : U=D \setminus p \hookrightarrow D$ be an open embedding. We consider a subgroup
$$
\xymatrix{
A_U(\ff) = H^0 (U, j^* \ff )  \; \ar@{^{(}->}[r]   & \; \ff_{\eta}  \, \mbox{,}
}
$$
and the usual adelic product
$$\da_D(\ff) = \mathop{{\prod}'}_{q \in D} \cF_q  \otimes_{\hat{\oo}_q }  K_q  \mbox{.}$$
over all (closed) points of $D$.

Our goal is to construct an exact sequence:
\begin{equation}  \label{ex-one-adeles}
0 \lrto \prod\limits_{q \in D, q\ne p} \cF_q \stackrel{\iota}{\lrto}  \da_D(\ff)/ \ff_{\eta}  \stackrel{\psi}{\lrto}  \left( \cF_p  \otimes_{\hat{\oo}_p }  K_p \right) / A_U(\ff) \lrto 0  \, \mbox{,}
\end{equation}
where the group $\ff_{\eta}$ is diagonally embedded into the group $\da_D(\ff)$, the map $\iota $ is induced by the natural embedding
$\prod\limits_{q \in D, q\ne p} \cF_q  \hookrightarrow \da_D(\ff)$, and the map $\psi$ will be defined below.

We consider the adelic complex for the sheaf $j_* j^* \ff$ that calculates the cohomology of the sheaf $j_*j^* \ff$ on the curve $D$:
$$
\ff_{\eta}  \oplus \left(\prod\limits_{q \in D, q\ne p} \cF_q \right) \oplus \left(\cF_p  \otimes_{\hat{\oo}_p }  K_p \right)  \lrto  \da_D(\ff) \, \mbox{.}
$$
Since $U$ is affine, we obtain $H^1(D, j_* \oo_U) =0$. Hence we have
\begin{equation}  \label{dens}
\da_D(\ff)= \ff_{\eta} + \left(\prod\limits_{q \in D, q\ne p} \cF_q \right) + \left(\cF_p \otimes_{\hat{\oo}_p }  K_p \right)   \, \mbox{.}
\end{equation}

Now we construct the map $\psi$ in sequence~\eqref{ex-one-adeles} in the following way. Let $x $ be an element from the group $\da_D(\ff)/ \ff_{\eta}$,
and an element $\tilde{x} \in \da_D(\ff)$ be any lift of $x$.
Then by formula~\eqref{dens} there is an element $f $ from the group $\ff_{\eta}$ such that $f + \tilde{x}$ belongs to the subgroup
$$
\left(\prod\limits_{q \in D, q\ne p} \cF_q \right) \oplus \left(\cF_p \otimes_{\hat{\oo}_p }  K_p \right)  \, \mbox{.}
$$
We define
$$
\psi(x) \eqdef s \cdot {\rm pr}_p (f + \tilde{x})  \, \mbox{,}
$$
where the map ${\rm pr}_p$ is the projection from the group $\da_D(\ff)$ to the group $\cF_p \otimes_{\hat{\oo}_p }  K_p$, and the map $s$
is the natural map $\cF_p \otimes_{\hat{\oo}_p }  K_p  \lrto \left( \cF_p  \otimes_{\hat{\oo}_p }  K_p \right) / A_U(\ff)$.
It is clear that the map $\psi$
is well-defined, since for any other choices $\tilde{x}'$ and $f'$ we have that the element $\tilde{x} -\tilde{x}' + f - f' $ belongs to the subgroup
$$
\ff_{\eta} \cap \left( \left(\prod\limits_{q \in D, q\ne p} \cF_q \right) \oplus \left(\cF_p  \otimes_{\hat{\oo}_p }  K_p \right) \right)
= A_U(\ff) , \mbox{.}
$$

By construction, it is clear that the map $\iota$ is an embedding, the map $\psi$ is a surjection, and $\psi \cdot \iota =0 $. Besides,
we have $\Ker \psi \subset \Image \iota $, since if $\psi(x)=0$, then there is an element $f \in \ff_{\eta}$ such that
$$
f + \tilde{x}  \in \left( \prod\limits_{q \in D, q\ne p} \cF_q \right) +  \left( \ff_{\eta} \cap \prod\limits_{q \in D, q\ne p} \cF_q \right)   \, \mbox{,}
$$
and hence $\tilde{x}  \in \left(\prod\limits_{q \in D, q\ne p} \cF_q \right) + \ff_{\eta} $. Therefore sequence~\eqref{ex-one-adeles} is exact.

\begin{example}  \label{simpl-ex}
{\em
If $\ff = \oo(E)$ for a divisor $E$ on the curve $D$, then sequence~\eqref{ex-one-adeles} is
$$
0 \lrto  \prod\limits_{q \in D, q\ne p} \hat{\oo}_q (E) \stackrel{\iota}{\lrto}  \da_{k(D)}/ k(D)  \stackrel{\psi}{\lrto}    K_p  / A_U(E)  \lrto 0  \, \mbox{,}
$$
where $k(D)$ is the field of rational functions on the curve $D$, and  $A_U(E)= A_U(\oo(E))$.}
\end{example}

\begin{nt} {\em
Clearly, exact sequence~\eqref{ex-one-adeles}  is evidently generalized to the case of several points $p_1, \ldots, p_n$ on the curve $D$ (instead of one point $p$) and an affine open subscheme $U= D \setminus \{p_1, \ldots, p_n  \}$.
}
\end{nt}

\section{Case of two-dimensional schemes}
The goal of this and the next section is
to consider the case of   quotient groups of the  Parshin-Beilinson adelic groups on  two-dimensional schemes (on algebraic surfaces in the next section).

\subsection{Adelic formalism}  \label{ad_form}
Let $X$ be an excellent Noetherian (e.g., finite type over a field $k$ or over $\Z$) normal integral  two-dimensional separated scheme $X$.
For any closed point $x \in X$ let $\hat{\oo}_x$ be a completion of the local ring $\oo_x$ of the point $x$ with respect to the maximal ideal.
Then $\hat{\oo}_x$ is again an integrally closed  domain (see, e.g.,~\cite[ch.~13, \S~33, th.~79]{M}). Let $K_x$ be a localization of the ring $\hat{\oo}_x$ with respect to the multiplicative system
$\oo_x  \setminus 0$. For any one-dimensional integral closed subscheme $D$ of $X$ let $K_D$ be a completion of the field of rational functions on $X$ with respect to the discrete valuation given by $D$.

Let $x \in D$ be any pair such that  $x $ is a closed point on $X$ and $D$ is a one-dimensional integral closed subscheme of $X$.
Let $\rho_i$, where $1 \le i \le l$, be all height one prime ideals of the ring $\hat{\oo}_x$ which contain an ideal $\rho_D \hat{\oo}_x$, where the prime ideal $\rho_D$ of the ring $\oo_x$ defines the subscheme $D \mid_{\Spec \oo_x}$.
We define
$$
K_{x,D} = \prod_{1 \le i \le l} K_i \mbox{,}
$$
where $K_i$ is a two-dimensional local field obtained as the completion of the field $\Frac \hat{\oo}_{x}$ with respect to the discrete valuation given by the prime ideal $\rho_i$. Similarly, we define
$$
\oo_{K_{x,D}}= \prod_{1 \le i \le 1} \oo_{K_i}  \, \mbox{,}
$$
where $\oo_{K_i}$ is the discrete valuation ring of the field $K_i$.

For any quasicoherent sheaf on $X$ there is its adelic group, see its definition, for example,  in~\cite{H, Osi}.
We will use the following notation. For any locally free sheaf $\ff$ on $X$ let $\hat\ff_{q}$ be the completion of the stalk $\ff_q$  of the sheaf $\ff$ at a  point $q$ of $X$.
For a one-dimensional integral closed subscheme $D$ of $X$ by $\hat{\ff}_D$ we mean the completion of the stalk of $\ff$ at the generic point of $D$, i.e. at  a non-closed point on $X$
whose closure coincides with $D$.
We consider the adelic group of the sheaf $\ff$:
$$
\da_X(\ff) = {\prod_{x \in D}}' K_{x,D}(\ff) \;  \subset \;  {\prod_{x \in D}} K_{x,D}(\ff)  \; \mbox{,}
$$
where $K_{x,D}(\ff)= \hat{\ff}_{x}  \otimes_{\hat{\oo}_x} K_{x,D}$, and $x \in D$ run over all pairs as above. We define also
$\oo_{K_{x,D}}(\ff)= \hat{\ff_{x}}  \otimes_{\hat{\oo}_x} \oo_{K_{x,D}}$.

Similarly, there are subgroups  of  $\da_X(\ff)$:
\begin{gather}
\da_{X,01}(\ff)=  \da_X(\ff)  \cap \prod_D K_D(\ff)   \, \mbox{,} \qquad
\da_{X, 02}(\ff) = \da_X(\ff)  \cap \prod_x K_x(\ff)    \, \mbox{,} \notag \\  \label{third}
\da_{X, 12}(\ff) = \da_X(\ff)  \cap \prod_{x \in D} \oo_{K_{x,D}}(\ff)  =
\mathop{{\prod}'}_{x \in D} \oo_{K_{x,D}}(\ff)
\, \mbox{,}
\end{gather}
where
the intersections are taken in  ${\prod\limits_{x \in D}} K_{x,D}(\ff)$, we consider the diagonal embeddings of $\prod\limits_D$ and $\prod\limits_x$
into $\prod\limits_{x \in D}$,  and $K_D(\ff) = \hat{\ff}_D \otimes_{\hat{\oo}_D} K_D $,
$K_x(\ff) = \hat{\ff}_x  \otimes_{\hat{\oo}_x}  K_x$.

Besides, for any subset $\Delta$ of the set of all pairs $x \in D$ as above, we will use notation ${\mathop{\prod}\limits_{\Delta}}'$ for the intersection of $\mathop{\prod}\limits_{\Delta} $
of the same factors with $\da_X(\ff)$ inside of $\mathop{\prod}\limits_{x \in D} K_{x,D}(\ff)$ (compare with formula~\eqref{third}).

\subsection{Adelic quotient group}
\subsubsection{Construction of a surjective map~$\varphi$} \label{sur_map}
We keep the notation from section~\ref{ad_form}.

Let $C$ be a reduced one-dimensional closed equidimensional subscheme
of the scheme~$X$. We consider $C= \bigcup\limits_{1 \le i \le w} C_i$, where $C_i$ are integral one-dimensional closed subschemes of~$X$.
We suppose that $U = X \setminus C$ is an affine scheme. Let $j : U \hookrightarrow X$  be the corresponding embedding.

Let $\ff$ be a locally free sheaf  on $X$.
We note that
$$
H^i(X, j_* j^* \ff ) = H^i (U,  j^* \ff) =0 \qquad \mbox{for} \qquad i \ge 1  \, \mbox{.}
$$

We note also that $j_* j^* \ff =  \ff \otimes_{\oo_X} j_* \oo_U$. Since we supposed that $X$ is an intergal and normal scheme (in particular, the singular locus of $X$ is a finite number of closed points), we have
\begin{equation}   \label{O_n}
j_* \oo_U = \varinjlim_{n} \oo_X(nC)   \, \mbox{,}
\end{equation}
where $\oo_X(nC)$ is a torsion free reflexive coherent subsheaf of the constant sheaf of the field of rational functions on $X$, and this subsheaf consists of elements of $j_* \oo_U$ which have discrete valuations given by subschemes $C_i$ (where $1 \le i \le w$) greater or equal to~$-n$.
(This follows, e.g., from~\cite[\S~3]{BD}.)

From $H^2(X, j_* j^* \ff) =0 $  and the adelic complex for the sheaf $j_* j^* \ff $  we obtain
\begin{equation}  \label{eq_ad}
\da_{X} (j_* j^* \ff)) = \da_{X, 12}(j_* j^* \ff)  + \da_{X, 01}(j_* j^* \ff) + \da_{X, 02}(j_* j^* \ff) \, \mbox{.}
\end{equation}
We have
$$
\da_{X}(j_* j^* \ff) = \da_X(\ff) \, \mbox{,}  \qquad \da_{X, 01}(j_* j^* \ff) = \da_{X, 01}(\ff) \, \mbox{,} \qquad \da_{X, 02}(j_* j^* \ff)= \da_{X, 02}(\ff) \, \mbox{.}
$$
Since for any affine open subscheme $V= \Spec A$ of the scheme $X$ we have
(the proof is analogous to the proofs of~\cite[Prop.~1(5)]{Osi1} and of~\cite[Prop.~4(2)]{Osi1})
$$\da_{V,12}((j_* j^* \ff) \mid_V)= \da_{V,12}(\oo_V) \otimes_A
H^0 (V, (j_* j^* \ff) \mid_V)   \, \mbox{,}$$
we obtain
\begin{equation}  \label{ad12}
\da_{X, 12} (j_* j^* \ff) = \mathop{{\prod}'}_{x \in D, D \nsubset C} \oo_{K_{x,D}}(\ff)  \oplus  \prod_{1 \le i \le w}  \mathop{{\prod}'}_{x \in C_i} K_{x,C_i}(\ff)  \mbox{. }
\end{equation}
We denote
\begin{equation}  \label{dau}
\da_{X, 12}^U (\ff)= {\mathop{{\prod}'}\limits_{x \in D, D \nsubset C}} \oo_{K_{x,D}}(\ff) \, \mbox{.}
\end{equation}

Hence and from formula~\eqref{eq_ad} we obtain
\begin{equation}  \label{h2}
\da_{X}(\ff) = \da_{X, 01}(\ff) + \da_{X, 02}(\ff) + \left( \da_{X, 12}^U (\ff)  \oplus \prod_{1 \le i \le w}  \mathop{{\prod}'}_{x \in C_i} K_{x,C_i}(\ff)  \right) \, \mbox{.}
\end{equation}

\bigskip

Now we want to construct the map $\varphi$:
\begin{gather}
\label{varphi}
\da_X(\ff)/ \left( \da_{X, 01}(\ff) + \da_{X, 02}(\ff)      \right)  \; \stackrel{\varphi}{\lrto}  \;
\left( \prod_{1 \le i \le w}   \mathop{{\prod}'}_{x \in C_i} K_{x,C_i}(\ff) \right) / \Upsilon(\ff)    \, \mbox{,}  \\
\mbox{where the group}
 \qquad \qquad \qquad \qquad \qquad \qquad \qquad \qquad \qquad \qquad \qquad \qquad \qquad \qquad \qquad \qquad \qquad \qquad \quad
 \nonumber
\\ \label{pro}
 \Upsilon(\ff) =  \pro\nolimits_C \left(  \left( \da_{X, 01}(\ff)  + \da_{X, 02}(\ff)    \right)  \cap      \left(
    \da_{X, 12}^U (\ff) \oplus    \prod_{1 \le i \le w}  \mathop{{\prod}'}_{x \in C_i} K_{x,C_i}(\ff)
\right)             \right) \mbox{.}
\end{gather}
Here the map $\pro_C$ is the projection from the group $\da_X(\ff)$ to the group
$\prod\limits_{1 \le i \le w}  \mathop{{\prod}'}\limits_{x \in C_i} K_{x,C_i}(\ff)$.

We define the map $\varphi$ as follows:
$$
\varphi(x) \eqdef s \cdot \pro\nolimits_C (\tilde{x} + g)  \, \mbox{,}
$$
where $s$ is the natural map from the group $\prod\limits_{1 \le i \le w}  \mathop{{\prod}'}\limits_{x \in C_i} K_{x,C_i}(\ff)$  to the group \linebreak
$\prod\limits_{1 \le i \le w}  \mathop{{\prod}'}\limits_{x \in C_i} K_{x,C_i}(\ff)
/ \Upsilon$,
an element
$\tilde{x}$ is any lift of an element $x$ from the group $\da_X(\ff)/ \left( \da_{X, 01}(\ff) + \da_{X, 02}(\ff) \right)$ to the group $\da_X(\ff)$, and an element
$$g  \, \in \, \left( \da_{X, 01}(\ff) + \da_{X, 02}(\ff) \right) $$
 is chosen with the property
\begin{equation} \label{lift}
\tilde{x}  + g  \, \in \,  {\da_{X, 12}^U (\ff) \oplus \prod\limits_{1 \le i \le w}  \mathop{{\prod}'}\limits_{x \in C_i} K_{x,C_i}(\ff)} \, \mbox{.}
\end{equation}
(Such an element $g$ exists by formula~\eqref{h2}.)

The map $\varphi$  is well-defined, since if an element $\tilde{x}'$ is another lift and $g'$ is another choice of corresponding elements, then
we have
$$
\tilde{x}' + g' - \tilde{x} - g \; \in \;
  \left( \da_{X, 01}(\ff)  + \da_{X, 02}(\ff)    \right)  \cap      \left(
    \da_{X, 12}^U (\ff) \oplus    \prod_{1 \le i \le w}  \mathop{{\prod}'}_{x \in C_i} K_{x,C_i}(\ff)
\right)  \, \mbox{.}
$$

Besides, from the construction it is clear that the map $\varphi$ is surjective.

\begin{lemma}  \label{lem1}
There is an equality of subgroups of the group $\da_{X}(\ff)$:
\begin{multline*}
\da_{X,12}(j_* j^* \ff)  \cap \left( \da_{X, 01}(\ff)  + \da_{X, 02}(\ff)     \right) = \\
= \left( \da_{X, 12}(j_* j^* \ff)  \cap \da_{X, 01}(\ff)                 \right)
+
\left(
\da_{X, 12}(j_* j^* \ff)  \cap \da_{X, 02}(\ff)
\right)   \, \mbox{.}
\end{multline*}
\end{lemma}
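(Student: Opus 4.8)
The plan is to prove the equality by combining two ingredients: a purely formal "modular law" type argument using the fact that $\da_{X,12}(j_*j^*\ff)$ splits as a direct sum indexed by a partition of the set of pairs $x \in D$, and a local analysis of how $\da_{X,01}(\ff)$ and $\da_{X,02}(\ff)$ sit inside $\da_X(\ff)$ with respect to that decomposition. Recall from~\eqref{ad12} that
\[
\da_{X,12}(j_*j^*\ff) = \da_{X,12}^U(\ff) \;\oplus\; \prod_{1\le i\le w} {\mathop{{\prod}'}_{x\in C_i}} K_{x,C_i}(\ff),
\]
and that the two summands are "supported" on disjoint sets of local components: the first on pairs $x\in D$ with $D\nsubset C$, the second on pairs $x\in D$ with $D\subset C$. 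The inclusion $\supseteq$ in the Lemma is trivial, so the whole content is the inclusion $\subseteq$: given $a = b + c$ with $a\in\da_{X,12}(j_*j^*\ff)$, $b\in\da_{X,01}(\ff)$, $c\in\da_{X,02}(\ff)$, I must produce a new decomposition $a = b' + c'$ in which $b'\in\da_{X,12}(j_*j^*\ff)\cap\da_{X,01}(\ff)$ and $c'\in\da_{X,12}(j_*j^*\ff)\cap\da_{X,02}(\ff)$.

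**The key step: correcting $b$ and $c$ component-by-component.** The natural idea is to use the projections. Write $\pi_U$ and $\pi_C$ for the projections of $\da_X(\ff)$ onto the two summands above (the second being $\pro_C$ from the text), extended to the ambient product. Since $a$ lies in the direct sum, $a = \pi_U(a) + \pi_C(a)$. The difficulty is that $b$ and $c$ individually need not lie in $\da_{X,12}(j_*j^*\ff)$, so one cannot naively apply $\pi_U$, $\pi_C$ to them. Instead I would argue as follows: the group $\da_{X,01}(\ff)$ is, by definition, $\da_X(\ff)\cap\prod_D K_D(\ff)$, and a crucial structural point is that $\prod_D K_D(\ff)$ itself decomposes as $\big(\prod_{D\nsubset C} K_D(\ff)\big)\oplus\big(\prod_{D\subset C} K_D(\ff)\big)$, compatibly with $\pi_U\oplus\pi_C$ in the sense that the projections of an element of $\prod_D K_D(\ff)$ again lie in the respective pieces; similarly for $\prod_x K_x(\ff)$ with respect to whether $x$ lies on $C$ or not — but here one must be careful, because a single point $x$ can lie on both components of $C$ and on curves $D\nsubset C$, so the point-components of $\da_{X,02}(\ff)$ do \emph{not} split along the $U$/$C$ partition of pairs. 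This is the genuine obstacle, and it is why the Lemma is stated at the level of $\da_{X,12}$ rather than something cruder.

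**Resolving the obstacle.** I expect the resolution to run via the intersection description: an element of $\da_{X,02}(\ff)$ is determined by its values $c_x\in K_x(\ff)$ at closed points, embedded diagonally over all $D\ni x$; projecting onto the $C$-part keeps, for each $i$ and each $x\in C_i$, the same diagonal value $c_x$, and this still defines an element of $\mathop{{\prod}'}_{x\in C_i}K_{x,C_i}(\ff)$ lying in $\prod_x K_x(\ff)$, hence in $\da_{X,02}(\ff)$ (after checking it lies in the restricted product — which follows because $a$ does and $b$'s $C$-part does, or by a direct boundedness argument). Concretely: set $c' := \pi_C(c)$, $b' := \pi_C(b)$. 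From $a=b+c$ and $a\in\da_{X,12}(j_*j^*\ff)$ one gets $\pi_U(b+c)=\pi_U(a)$ and $\pi_C(b+c)=\pi_C(a)=a-\pi_U(a)$; the point is to show $b':=\pi_C(b)\in\da_{X,01}(\ff)$ and $c':=\pi_C(c)\in\da_{X,02}(\ff)$, and then that $a = b'+c'$, i.e. that $\pi_U(b)+\pi_U(c)=0$ after subtracting off $\pi_U(a)$ appropriately — this last identity I would extract from $\pi_U(a)=\pi_U(b)+\pi_U(c)$ together with the observation that $\pi_U(a)\in\da_{X,12}^U(\ff)\subset\da_{X,12}(\ff)$, which can be \emph{separately} split as (intersection with $\da_{X,01}$) $+$ (intersection with $\da_{X,02}$) by a simpler version of the same lemma over the open $U$ — or directly, since over $U$ the relevant curves satisfy $D\nsubset C$ and the local rings $\oo_{K_{x,D}}$ give the integral structure.

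**Summary of the plan and the main obstacle.** In outline: (1) reduce to the inclusion $\subseteq$; (2) apply $\pro_C$ and the complementary projection $\pi_U$ to a decomposition $a=b+c$; (3) check that the $C$-components $\pro_C(b)$, $\pro_C(c)$ still lie in $\da_{X,01}(\ff)$, $\da_{X,02}(\ff)$ respectively — the step needing care for $\da_{X,02}$ because point-components are shared between $C$ and its complement; (4) handle the leftover $U$-components, reducing to the analogous but easier splitting statement on the affine scheme $U$ (where it follows from the affineness/exactness facts recalled around~\eqref{O_n} and~\eqref{eq_ad}, or from the cited analogues of~\cite[Prop.~1(5), Prop.~4(2)]{Osi1}); (5) assemble $a=b'+c'$. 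The main obstacle is step (3)/(4): verifying that the naive projection of an adele supported "diagonally over all curves through a point" remains a legitimate element of $\da_{X,02}(\ff)$ after discarding the curves not in $C$, and symmetrically that nothing is lost on the $U$-side. I expect this to follow cleanly once one writes the intersection conditions out carefully, since the direct-sum decomposition~\eqref{ad12} is precisely engineered so that $\pro_C$ interacts well with both $\da_{X,01}(\ff)$ and $\da_{X,02}(\ff)$.
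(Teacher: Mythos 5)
Your proposal does not close the gap that you yourself identify, and it misses the one ingredient that any correct proof must use. The central step --- setting $c':=\pro_C(c)$ for $c\in\da_{X,02}(\ff)$ and asserting $c'\in\da_{X,02}(\ff)$ --- is false. An element of $\da_{X,02}(\ff)=\da_X(\ff)\cap\prod_x K_x(\ff)$ is \emph{diagonal}: its component at every pair $x\in D$ is $p_{x,D,\ff}(c_x)$ for one and the same $c_x\in K_x(\ff)$. Applying $\pro_C$ keeps $p_{x,C_i,\ff}(c_x)$ on the pairs $x\in C_i$ but replaces $p_{x,D,\ff}(c_x)$ by $0$ on the pairs $x\in D$ with $x\in C$, $D\nsubset C$; since $p_{x,D,\ff}$ is injective, the result is the diagonal image of an element of $\prod_x K_x(\ff)$ only when $c_x=0$. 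You correctly flag this as ``the genuine obstacle,'' but the paragraph meant to resolve it merely restates the desired membership (and addresses only the restricted-product condition, not diagonality). The subsequent step (4) is also not a reduction: $\da_{X,12}^U(\ff)$ is indexed by pairs $x\in D$ with $D\nsubset C$ but $x$ possibly lying on $C$, so it is not the group $\da_{U,12}$ of the scheme $U$, and in any case ``the analogous splitting statement over $U$'' is a statement of exactly the same kind as the lemma, so invoking it is circular.

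More fundamentally, your argument is purely formal and nowhere uses that $U$ is affine, i.e.\ that $H^1(X,j_*j^*\ff)=0$. This is indispensable: comparing the adelic complex of $\mh=j_*j^*\ff$ with the complex
$\da_{X,1}(\mh)\oplus\da_{X,2}(\mh)\lrto\left(\da_{X,01}(\mh)+\da_{X,02}(\mh)\right)\oplus\da_{X,12}(\mh)\lrto\da_X(\mh)$,
the failure of the asserted equality is precisely the first cohomology of the latter complex, which by the long exact sequence of the paper's three-column diagram is a quotient of $H^1(X,\mh)$; for a sheaf with nonvanishing $H^1$ the equality can fail, so no manipulation of projections alone can prove it. This is exactly how the paper argues: the lemma follows from $H^1(X,\mh)=0$ together with the identifications $\da_{X,1}(\mh)=\da_{X,01}(\mh)\cap\da_{X,12}(\mh)$ and $\da_{X,2}(\mh)=\da_{X,02}(\mh)\cap\da_{X,12}(\mh)$, the second of which requires a genuine commutative-algebra input at the singular points of $X$ (the completion of a reflexive, i.e.\ maximal Cohen--Macaulay, $\oo_x$-module is again reflexive, giving $\oo_x(nC)\otimes_{\oo_x}\hat\oo_x=\hat\oo_x(nC\mid_{\Spec\hat\oo_x})$). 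Your proposal contains no counterpart to either ingredient, so as it stands it is not a proof.
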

\begin{proof}
 We  denote the sheaf ${\mathcal H} = j_*j^* \ff$ on $X$ and consider a diagram with exact columns:
$$
\xymatrix{
\da_{X, 0}({\mathcal H})  \ar@{^{(}->}[d] \ar[r]   &
\da_{X, 01}({\mathcal H}) \cap \da_{X, 02}({\mathcal H})  \ar[r] \ar@{^{(}->}[d] & 0 \ar@{^{(}->}[d] \\
\da_{X, 0}(\mh)  \oplus \da_{X, 1}(\mh)  \oplus \da_{X,2}(\mh)  \ar[r]  \ar@{->>}[d] & \da_{X, 01}(\mh)  \oplus \da_{X, 02}(\mh) \oplus \da_{X,12}(\mh)
\ar[r]  \ar@{->>}[d]  & \da_{X}(\mh)   \ar@{->>}[d] \\
\da_{X,1}({\mathcal H})  \oplus \da_{X,2}({\mathcal H})   \ar[r] & \left( \da_{X,01}({\mathcal H}) + \da_{X, 02}({\mathcal H})    \right) \oplus \da_{X, 12}({\mathcal H}) \ar[r] & \da_{X}({\mathcal H})
\, \mbox{,}}
$$
where the middle row is the adelic complex  for the sheaf $\mh$ on $X$. This adelic complex calculates the cohomology of $\mh$ on $X$.
 Therefore the statement of the lemma follows from the long exact sequence constructed by this diagram and from the facts:
$H^1(X, {\mathcal H})=0 $ and
\begin{equation}  \label{intersect}
\da_{X,1}({\mathcal H}) = \da_{X, 01}({\mathcal H}) \cap \da_{X, 12}({\mathcal H}) \, \mbox{,}
\qquad \qquad
\da_{X,2}({\mathcal H})= \da_{X, 02}({\mathcal H} )  \cap \da_{X, 12}({\mathcal H}) \, \mbox{.}
\end{equation}
To prove formulas~\eqref{intersect}, we note that it is enough to consider a sheaf $\ff \otimes_{\oo_X} \oo_X(nC) $ (where $n \ge 0$) instead of the sheaf $\mh$, because
$\mh =  \varinjlim\limits _{n} \left(\ff \otimes_{\oo_X} \oo_X(nC) \right)$, and adelic factors commute with direct limits. Now the first equality in~\eqref{intersect} is true since it is easy to see for a locally free sheaf and  the sheaf $\ff \otimes_{\oo_X} \oo_{X}(nC)$ is invertible except for the finite number of singular points of $X$ which do not impact on the equality. The second equality is again easy to see for an open subscheme of $X$ where the sheaf $\ff \otimes_{\oo_X} \oo_X(nC)$ is a locally free sheaf, because $X$ is a normal scheme. Therefore it is enough to check this equality locally for a singular (closed) point $x$ of $X$, where it follows from an equality of $\hat{\oo}_x$-submodules of $\Frac \hat{\oo}_x$:
\begin{equation}  \label{compl}
\oo_x(nC) \otimes_{\oo_x} \hat{\oo}_x = \hat{\oo}_x(nC \mid_{\Spec(\hat{\oo}_x)} ) \, \mbox{.}
\end{equation}
Equality~\eqref{compl} is true, because the completion of a maximal Cohen-Macaulay $\oo_x$-module (which is the same as reflexive module in this case) is a maximal Cohen-Macaulay $\hat{\oo}_x$-module (i.e. a reflexive module).
\end{proof}

For any pair $x \in D$ on $X$ (as in Section~\ref{ad_form}) we have a canonical embedding of groups
$$
\xymatrix{
p_{x,D, \ff} \; : \; K_x(\ff) \; \ar@{^{(}->}[r]   & \;  K_{x,D}(\ff) \, \mbox{.}
}
$$
For any closed point $x \in X $ we define a subgroup $B_{x,C}(\ff)$ of the group $K_{x}(\ff)$ in the following way:
$$
B_{x,C}(\ff) \eqdef \bigcap_{D \ni x, D \nsubset C} p_{x,D, \ff}^{-1} \left( p_{x,D, \ff}(K_x(\ff))  \cap \oo_{K_{x,D}}(\ff) \right)
$$
\begin{nt}  \label{another} {\em
From the proof of Lemma~\ref{lem1} we have that
$$B_{x,C} (\ff)  = \hat{\oo}_x \otimes_{\oo_x}  (j_* j^* \ff)_x = \hat{\ff}_x  \otimes_{\oo_x} (j_* j^* \oo_X)_x \, \mbox{.}$$
}
\end{nt}
\begin{nt} \em
In~\cite[\S~14]{OsipPar2} we used notation $B_x$ instead of $B_{x,C}$ (i.e., without indicating on the subscheme $C$).
\end{nt}

Now from Lemma~\ref{lem1} and formula~\eqref{ad12} we immediately obtain
\begin{multline}
\da_{X, 12}(\ff) \cap \left( \da_{X, 01}(\ff)  + \da_{X, 02}(\ff) \right)=   \\
 =
\left( \prod_{D \subset X, D \nsubset C}  \hat{\ff}_{D}  \oplus \prod_{1 \le i \le w} K_{C_i}(\ff)  \right)
+
\left(
\prod_{x \in U} \hat{\ff}_x   \oplus \mathop{{\prod}'}_{x \in C} B_{x,C}(\ff)
\right)   \, \mbox{.}    \label{h1}
\end{multline}

Therefore the projection map (see formula~\eqref{pro})  gives
$$
\pro\nolimits_C \left(  \da_{X, 12}(\ff)  \cap \left( \da_{X, 01}(\ff)  + \da_{X,02}(\ff)  \right)   \right)
= \prod_{1 \le i \le w} K_{C_i}(\ff)  + \mathop{{\prod}'}_{x\in C} B_{x,C}(\ff)  \, \mbox{.}
$$

Thus, bearing in mind formula~\eqref{varphi}, we have that the map $\varphi$ is the following surjective map:
$$
\hspace{-0.1cm}
\da_X(\ff)/ \left( \da_{X, 01}(\ff) + \da_{X, 02}(\ff)      \right)   \stackrel{\varphi}{\lrto}
\left( \prod_{ 1 \le i \le w}  \mathop{{\prod}'}_{x \in C_i}   K_{x, C_i}(\ff)  \right)
/
\left( \prod_{1 \le i \le w} K_{C_i}(\ff)   + \mathop{{\prod}'}_{x \in C} B_{x,C}(\ff)                     \right)
 \mbox{.}
$$

\subsubsection{Calculation of the kernel}  \label{calc_ker}
We consider now a natural  map
$$
\mathop{{\prod}'}_{x \in D, D \nsubset C} \oo_{K_{x,D}}(\ff) \;  \stackrel{\phi}{\lrto}  \; \da_{X}(\ff) / \left(  \da_{X, 01}(\ff)  + \da_{X, 02}(\ff)   \right)  \, \mbox{.}
$$
It is clear that $\Image \phi \subset \Ker \varphi$. (Indeed, for the construction of $\varphi$ we take $g=0$ and the lift $\tilde{x}$ which comes from the image of $\phi$.)

Let us show that $\Ker \varphi \subset \Image \phi $.  Let $\varphi(x)= 0 $ for an element $x $
 from the group $
 \da_{X}(\ff) / \left(  \da_{X, 01}(\ff)  + \da_{X, 02}(\ff) \right)$.
Then, by construction, there is an element ${g }$ from the group $  {\da_{X, 01}(\ff)  + \da_{X, 02}(\ff)}$ such that
$$
\tilde{x} + g \; \in \;
\mathop{{\prod}'}_{x \in D, D \nsubset C} \oo_{K_{x,D}}(\ff) +
\left( \da_{X, 01}(\ff)  + \da_{X,02}(\ff) \right) \, \mbox{,}
$$
where ``$\in$''  follows from formulas~\eqref{lift} and~\eqref{pro}.
Therefore we have
$$
\tilde{x}  \; \in \;    \mathop{{\prod}'}_{x \in D, D \nsubset C} \oo_{K_{x,D}}(\ff)  +\left( \da_{X, 01}(\ff)  + \da_{X,02}(\ff)  \right) \, \mbox{.}
$$

Thus, we obtain $\Image \phi = \Ker \varphi$. Therefore for an explicit description of  the group  $\da_{X}(\ff) / \left(  \da_{X, 01}(\ff)  + \da_{X, 02}(\ff)   \right) $,  we have to calculate the group $\Ker \phi$.

It is clear that
$$
\Ker \phi =    \left(  \da_{X, 01}(\ff)  + \da_{X, 02}(\ff)   \right) \,
\cap
\mathop{{\prod}'}_{x \in D, D \nsubset C} \oo_{K_{x,D}}(\ff)
\, \mbox{.}
$$
Let us calculate $\Ker \phi$ more explicitly.

For any pair $x \in D$ on $X$ (as in Section~\ref{ad_form}) we have canonical embeddings of groups
$$
\xymatrix{
q_{x,D, \ff} \; : \; K_D(\ff)  \; \ar@{^{(}->}[r]  & \; K_{x,D}(\ff)  \,  \mbox{.}
}
$$

\medskip

Now we {\em define} a subgroup $A_C(\ff) \subset \prod\limits_{1 \le i \le w} K_{C_i}(\ff)$\footnote{We recall that $C = \bigcup\limits_{1 \le i \le w} C_i$, where $C_i$ is an integral one-dimensional subscheme of $X$.} as the image of the  projection of the group
$\Ker \Xi$ to the group   $ \prod\limits_{1 \le i \le w}  K_{C_i}(\ff)$, where the map
\begin{equation}  \label{Xi}
\Xi \, : \,  \prod\limits_{1 \le i \le w} K_{C_i}(\ff)  \oplus \prod_{x \in C} B_{x,C}(\ff)  \lrto   \prod\limits_{1 \le i \le w}
\prod_{x \in C_i}  K_{x,C_i}(\ff)       \, \mbox{,}
\end{equation}
and   $\Xi (z \oplus v) = \prod\limits_{1 \le i \le w}
\prod\limits_{x \in C_i} q_{x, C_i, \ff}(z)   -  \prod\limits_{1 \le i \le w}
\prod\limits_{x \in C_i} p_{x, C_i, \ff}(v) $ for elements $z \in \prod\limits_{1 \le i \le w} K_{C_i}(\ff) $
and $v \in \prod\limits_{x \in C} B_{x,C}(\ff) $.

We note that if  $w=1$, i.e. $C = C_1$, then
$$
A_C(\ff) = \bigcap_{x \in C}
q_{x, C, \ff}^{-1} \left(   q_{x,C,\ff}(K_{C}(\ff)) \cap p_{x,C, \ff}(B_{x,C}) \right)
$$

We have a natural embedding $\tau$:
\begin{equation}  \label{tau}
\tau \; : \;
A_C(\ff)
\hookrightarrow  \mathop{{\prod}'}_{x \in C} B_{x,C}(\ff)
\hookrightarrow \mathop{{\prod}'}_{x \in X}  K_x(\ff)  \hookrightarrow \da_X(\ff)  \, \mbox{,}
\end{equation}
where the first arrow denotes the map $z \mapsto v$ (see definition of the map $\Xi$ in~\eqref{Xi}).

We have also a natural embedding $\gamma$:
\begin{equation}  \label{gamma}
\gamma \; : \; A_C(\ff)  \hookrightarrow \prod_{1 \le i \le w} K_{C_i}(\ff)  \hookrightarrow \da_X(\ff)  \, \mbox{.}
\end{equation}

We {\em claim} that
\begin{multline}
\Ker \phi =  \left(  \da_{X, 01}(\ff) \,  +  \, \da_{X, 02}(\ff)   \right) \;
\cap  \;
\mathop{{\prod}'}_{x \in D, D \nsubset C} \oo_{K_{x,D}}(\ff)   \;  \supset
\\
\supset  \;
 \prod_{D \subset X, D \nsubset C} \hat{\ff}_D  \,
+   \,
\prod_{x \in U}  \hat{\ff}_x   \,
+  \,
(\tau - \gamma)(A_C(\ff))  \mbox{.}   \label{ker}
\end{multline}
Indeed, it is clear that the first two summands from the last sum belong to the group $\Ker \phi $.
Besides, by construction, we have that
\begin{gather*}
\tau(A_C(\ff))  \subset \da_{X, 02} (\ff)  \; \mbox{,}   \qquad \gamma(A_C(\ff))  \subset \da_{X, 01}(\ff)
\, \mbox{,} \\   \mbox{and} \quad
(\tau - \gamma)(A_C (\ff))  \subset   \mathop{{\prod}'}_{x \in D, D \nsubset C} \oo_{K_{x,D}}(\ff)  \, \mbox{.}
\end{gather*}
Therefore, $(\tau - \gamma)(A_C(\ff))  \subset \Ker \phi$.

On the other hand (recall notation from formula~\eqref{dau}),
$$
\Ker \phi = \da_{X, 12}^U (\ff)  \cap \left( \da_{X, 01}(\ff)  + \da_{X, 02}(\ff)     \right)  \;
\subset  \;
\da_{X, 12}(\ff)  \cap \left( \da_{X, 01}(\ff)   + \da_{X, 02}(\ff)  \right)  \mbox{.}
 $$

Now, bearing in mind formula~\eqref{h1}, and using that the projection of the group $\Ker \phi$ to any group $K_{x,C_i}(\ff)$ is zero,
we obtain that in the group $\Ker \phi$ elements from the group $\prod\limits_{1 \le i \le w} K_{C_i}(\ff)$ should be ``compensated'' by elements from the group
$\mathop{\prod'}\limits_{x \in C} B_{x,C}(\ff)$ to obtain zero. Hence and from formula~\eqref{ker} we obtain that
\begin{equation}  \label{kernel}
\Ker \phi = \prod_{D \subset X, D \nsubset C} \hat{\ff}_D  \,
+   \,
\prod_{x \in U}  \hat{\ff}_x   \,
+  \,
(\tau - \gamma)(A_C(\ff))  \, \mbox{.}
\end{equation}

Thus, we have proved a theorem.
\begin{Th}  \label{ex-seq}
There is the following exact sequence
\begin{multline*}
0 \lrto
\frac{\mathop{{\prod}'}\limits_{x \in D, D \nsubset C} \oo_{K_{x,D}}(\ff)}{ \prod\limits_{D \subset X, D \nsubset C} \hat{\ff}_D
+
\prod\limits_{x \in U}  \hat{\ff}_x
+
(\tau - \gamma)(A_C(\ff)) }  \stackrel{\phi}{\lrto}
\frac{\da_X(\ff)}{  \da_{X, 01}(\ff) + \da_{X, 02}(\ff)    }   \stackrel{\varphi}{\lrto}  \\ \stackrel{\varphi}{\lrto}
\frac{ \prod\limits_{ 1 \le i \le w }  \mathop{{\prod}'}\limits_{x \in C_i}   K_{x, C_i}(\ff))}{
 \prod\limits_{1 \le i \le w} K_{C_i}(\ff)   + \mathop{{\prod}'}\limits_{x \in C} B_{x,C}(\ff)   }
 \lrto 0 \, \mbox{.}
\end{multline*}
\end{Th}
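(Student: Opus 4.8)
The plan is to assemble the exact sequence from three ingredients that have all been prepared above: the surjectivity of $\varphi$ onto the correct target, the identification $\Image\phi=\Ker\varphi$, and the explicit computation of $\Ker\phi$. First I would record that the map $\varphi$ of~\eqref{varphi} is well-defined and surjective; this was verified directly after its definition, the existence of the auxiliary element $g$ in~\eqref{lift} being guaranteed by the adelic decomposition~\eqref{h2}, which in turn comes from $H^2(X,j_*j^*\ff)=0$. To put the target of $\varphi$ in the shape appearing in the theorem, I would combine Lemma~\ref{lem1} with formula~\eqref{ad12} to obtain~\eqref{h1} for $\da_{X,12}(\ff)\cap(\da_{X,01}(\ff)+\da_{X,02}(\ff))$, using Remark~\ref{another} to recognise the local factors as $B_{x,C}(\ff)=\hat\ff_x\otimes_{\oo_x}(j_*j^*\oo_X)_x$; applying the projection $\pro_C$ then gives $\Upsilon(\ff)=\prod_{1\le i\le w}K_{C_i}(\ff)+\mathop{{\prod}'}_{x\in C}B_{x,C}(\ff)$, which is the denominator of the right-hand term.

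Next I would introduce the map $\phi$ from $\da_{X,12}^U(\ff)=\mathop{{\prod}'}_{x\in D,\,D\nsubset C}\oo_{K_{x,D}}(\ff)$ into the middle quotient and check $\Image\phi=\Ker\varphi$. The inclusion $\Image\phi\subseteq\Ker\varphi$ is immediate: in the definition of $\varphi$ one may take $g=0$ and a lift $\tilde x$ coming from $\phi$. For the reverse inclusion, from $\varphi(x)=0$ one uses~\eqref{lift} and~\eqref{pro} to find $g\in\da_{X,01}(\ff)+\da_{X,02}(\ff)$ with $\tilde x+g\in\da_{X,12}^U(\ff)+(\da_{X,01}(\ff)+\da_{X,02}(\ff))$, whence $\tilde x$ itself lies in this sum, i.e. $x\in\Image\phi$. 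Consequently $\Ker\phi=(\da_{X,01}(\ff)+\da_{X,02}(\ff))\cap\da_{X,12}^U(\ff)$, and what remains is to compute this group, since it is precisely the denominator of the left-hand term.

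For the kernel computation, the inclusion $\supseteq$ of the claimed expression is routine: $\prod_{D\nsubset C}\hat\ff_D$ and $\prod_{x\in U}\hat\ff_x$ visibly lie in the intersection, while for $A_C(\ff)$ the embeddings $\tau$ and $\gamma$ of~\eqref{tau} and~\eqref{gamma} land in $\da_{X,02}(\ff)$ and $\da_{X,01}(\ff)$ respectively, and $(\tau-\gamma)(A_C(\ff))\subset\da_{X,12}^U(\ff)$ because, by the definition~\eqref{Xi} of $\Xi$, the two prescriptions $q_{x,C_i,\ff}(z)$ and $p_{x,C_i,\ff}(v)$ cancel in every factor $K_{x,C_i}(\ff)$. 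For the inclusion $\subseteq$ I would use $\Ker\phi\subseteq\da_{X,12}(\ff)\cap(\da_{X,01}(\ff)+\da_{X,02}(\ff))$ and rewrite the latter by~\eqref{h1}; an element of $\Ker\phi$ then has a $\prod_i K_{C_i}(\ff)$-component $z$ and a $\mathop{{\prod}'}_{x\in C}B_{x,C}(\ff)$-component $v$, and the fact that its projection to every $K_{x,C_i}(\ff)$ vanishes forces $\Xi(z\oplus v)=0$, so that $z\in A_C(\ff)$ and the element equals $(\tau-\gamma)(z)$ modulo $\prod_{D\nsubset C}\hat\ff_D+\prod_{x\in U}\hat\ff_x$. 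This yields the equality~\eqref{kernel}.

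Putting it together: $\phi$ is injective after passing to the quotient by $\Ker\phi$, its image is exactly $\Ker\varphi$, and $\varphi$ surjects onto the stated quotient, so the three-term sequence is exact. I expect the main obstacle to be the $\subseteq$ direction of the kernel computation, i.e. turning the informal ``compensation'' argument into a clean one: one must be sure that decomposing an element of $\da_{X,12}(\ff)\cap(\da_{X,01}(\ff)+\da_{X,02}(\ff))$ via~\eqref{h1} and imposing vanishing of all $K_{x,C_i}(\ff)$-projections recovers precisely the graph of the map $z\mapsto v$ defining $\tau$, with no extra contribution on the off-$C$ part beyond $\prod_{D\nsubset C}\hat\ff_D+\prod_{x\in U}\hat\ff_x$; this is also where the normality of $X$ and the identification of $B_{x,C}(\ff)$ at the singular points (through~\eqref{compl} in Lemma~\ref{lem1}) are genuinely used.
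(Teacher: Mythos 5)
Your proposal is correct and follows essentially the same route as the paper: the surjectivity of $\varphi$ with the identification of $\Upsilon(\ff)$ via Lemma~\ref{lem1} and formula~\eqref{h1}, the verification that $\Image\phi=\Ker\varphi$, and the computation of $\Ker\phi$ through the ``compensation'' argument leading to~\eqref{kernel} are exactly the steps carried out in Sections~\ref{sur_map} and~\ref{calc_ker}. Your reformulation of the compensation step as the vanishing $\Xi(z\oplus v)=0$ forcing $z\in A_C(\ff)$ is a slightly more explicit phrasing of the same argument.
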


\bigskip

Now we calculate explicitly the group $A_C(\ff)$.

\begin{prop}  \label{A_C}
There is an isomorphism
\begin{equation}  \label{prop1}
A_C(\ff)  \simeq \varinjlim\limits _{n} \varprojlim\limits_{m < n}  H^0 \left( X, \ff \otimes_{\oo_X} \left( \oo_X(nC)/   \oo_X(mC) \right)  \right)  \,
\mbox{,}
\end{equation}
which comes from canonical embeddings  (with the same image) of groups from the left and right hand sides of formula~\eqref{prop1} into the group  $\prod\limits_{1 \le i \le w} K_{C_i}(\ff)$.
\end{prop}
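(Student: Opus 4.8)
The plan is to realize both sides of~\eqref{prop1} as subgroups of $\prod_{1\le i\le w}K_{C_i}(\ff)$ and then to identify them. For $A_C(\ff)$ this is almost built in: the projection of $\Ker\Xi$ onto the first summand $\prod_i K_{C_i}(\ff)$ is injective, since $0\oplus v\in\Ker\Xi$ forces $p_{x,C_i,\ff}(v_x)=0$ for all $x\in C_i$, hence $v_x=0$ for every $x\in C$ (the $p_{x,C_i,\ff}$ being embeddings); thus $A_C(\ff)\cong\Ker\Xi$. For the right-hand side, write $\g_{n,m}:=\ff\otimes_{\oo_X}\bigl(\oo_X(nC)/\oo_X(mC)\bigr)$ for $m<n$, a coherent sheaf supported on $C$. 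First I would construct the canonical map into $\prod_i K_{C_i}(\ff)$: restricting a section of $\g_{n,m}$ to (and completing at) the generic point $\eta_i$ of $C_i$ --- at which $\oo_{X,\eta_i}$ is a DVR, $X$ being normal, with a uniformizer $t_i$, and through which only $C_i$ passes --- gives $H^0(X,\g_{n,m})\to\prod_i\hat\ff_{C_i}\otimes_{\hat\oo_{C_i}}\bigl(t_i^{-n}\oo_{K_{C_i}}/t_i^{-m}\oo_{K_{C_i}}\bigr)$; passing to $\varprojlim_{m<n}$ (the target is already separated and complete, so the limit is $\prod_i\hat\ff_{C_i}\otimes_{\hat\oo_{C_i}}t_i^{-n}\oo_{K_{C_i}}$) and then to $\varinjlim_n$ yields the desired map into $\prod_i\hat\ff_{C_i}\otimes_{\hat\oo_{C_i}}K_{C_i}=\prod_i K_{C_i}(\ff)$.

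The next step is to see this map is injective, for which the crucial point is that $\g_{n,m}$ has no embedded associated primes. Indeed, $X$ is normal of dimension $2$, hence Cohen--Macaulay; the reflexive sheaves $\oo_X(nC),\oo_X(mC)$ are then maximal Cohen--Macaulay, so $\oo_X(nC)/\oo_X(mC)$ has depth $\ge 1$ at every closed point of $C$, and tensoring with the locally free $\ff$ preserves this. It follows that $H^0(X,\g_{n,m})$ injects into the product of the stalks of $\g_{n,m}$ at the $\eta_i$, and this injectivity is inherited by $\varprojlim_{m<n}$ (left exactness) and by $\varinjlim_n$ (exactness of filtered colimits). The same depth statement, via the Parshin--Beilinson adelic complex of $\g_{n,m}$ on the one-dimensional scheme $(n-m)C$, gives $H^0(X,\g_{n,m})=(\g_{n,m})_{\mathrm{gen}}\cap\prod_{x\in C}(\g_{n,m})^{\wedge}_x$ inside the corresponding full adelic group, where $(\g_{n,m})_{\mathrm{gen}}$ denotes the sections at the generic points, by~\eqref{compl} one has $(\g_{n,m})^{\wedge}_x=\hat\ff_x\otimes_{\hat\oo_x}\bigl(\hat\oo_x(nC)/\hat\oo_x(mC)\bigr)$, and the adelic group at the points of $C$ is the obvious subquotient (pole orders between $m$ and $n$) of the target $\prod_{1\le i\le w}\prod_{x\in C_i}K_{x,C_i}(\ff)$ of $\Xi$.

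Now I would compare the two images in $\prod_i K_{C_i}(\ff)$. If $z=(z_i)$ lies in the image of the right-hand side, then for some $n$ it has pole order along $C$ at most $n$ and its reduction $\bar z_m\in(\g_{n,m})_{\mathrm{gen}}$ lies in $H^0(X,\g_{n,m})$ for every $m<n$; at each $x\in C$ there is then an element $w_{x,m}\in(\g_{n,m})^{\wedge}_x$ matching $\bar z_m$, \emph{unique} by the depth statement, so the $w_{x,m}$ form a compatible system whose limit $v_x$ lies in $\hat\ff_x\otimes_{\hat\oo_x}\hat\oo_x(nC)$ --- see below --- hence in $B_{x,C}(\ff)=\hat\ff_x\otimes_{\oo_x}(j_*j^*\oo_X)_x$ by Remark~\ref{another} together with~\eqref{O_n} and~\eqref{compl}; by construction $p_{x,C_i,\ff}(v_x)=q_{x,C_i,\ff}(z_i)$ for all $x\in C_i$, so $z\oplus v\in\Ker\Xi$, i.e.\ $z\in A_C(\ff)$. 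Conversely, given $z\in A_C(\ff)$ with witness $v=(v_x)$, each $v_x$ has poles only along branches of $C$, and the valuation of $v_x$ along a branch of $C_i$ through $x$ equals that of $q_{x,C_i,\ff}(z_i)$, hence is bounded --- uniformly in $x$, the ramification being trivial outside the finitely many singular points of $C$ --- by the pole order of $z$; so $v_x\in\hat\ff_x\otimes_{\hat\oo_x}\hat\oo_x(NC)$ and $z\in\prod_i\hat\ff_{C_i}\otimes_{\hat\oo_{C_i}}t_i^{-N}\oo_{K_{C_i}}$ for a single $N$ independent of $x$. Reducing the equalities $p_{x,C_i,\ff}(v_x)=q_{x,C_i,\ff}(z_i)$ modulo $\hat\oo_x(mC)$, respectively $t_i^{-m}\oo_{K_{C_i}}$, exhibits $\bar z_m$ as an element of $H^0(X,\g_{N,m})$ for every $m<N$, so $z$ lies in the image of the right-hand side. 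Combined with injectivity, this gives~\eqref{prop1}.

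The step I expect to be the main obstacle is the passage, in the first part of the comparison, from the \emph{formal} element $\varprojlim_{m<n}w_{x,m}$ of $\varprojlim_{m<n}\bigl(\hat\ff_x\otimes_{\hat\oo_x}(\hat\oo_x(nC)/\hat\oo_x(mC))\bigr)$ to a genuine element of $B_{x,C}(\ff)$: one must know that the finitely generated $\hat\oo_x$-module $\hat\ff_x\otimes_{\hat\oo_x}\hat\oo_x(nC)$ is already complete for the topology given by the submodules $\hat\ff_x\otimes_{\hat\oo_x}\hat\oo_x(mC)$. A naive appeal to completeness of $\hat\oo_x$ is not enough, since this filtration is not obviously comparable to a power of $\m_x$; the right tool is Chevalley's lemma, which, applied to the decreasing sequence $\{\hat\oo_x(mC)\}$ of submodules of $\hat\ff_x\otimes\hat\oo_x(nC)$ in the complete Noetherian local ring $\hat\oo_x$ --- their intersection being $0$ since $\hat\oo_x$ is a domain --- shows that this topology is in fact \emph{finer} than the $\m_x$-adic one. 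Since $\hat\ff_x\otimes\hat\oo_x(nC)$ is finitely generated, hence $\m_x$-adically complete, and the submodules $\hat\ff_x\otimes\hat\oo_x(mC)$ are $\m_x$-adically closed, completeness in the finer topology follows, so $v_x$ is honest. Carrying this out cleanly, together with the routine but fiddly bookkeeping --- the adelic complex of $\g_{n,m}$ on $(n-m)C$, the identification of its adelic group with a subquotient of the target of $\Xi$, and the compatibility of the maps $p_{x,C_i,\ff}$, $q_{x,C_i,\ff}$ with the reductions --- is where the proof demands the most attention.
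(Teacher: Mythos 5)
Your proof is correct and follows essentially the same route as the paper: computing $H^0\left(X, \ff\otimes_{\oo_X}(\oo_X(nC)/\oo_X(mC))\right)$ via the adelic complex of this sheaf on the one-dimensional scheme $(C,\oo_X/J_C^{n-m})$ and then passing to the injective limit on $n$ and the projective limit on $m$. The points you elaborate --- the depth $\ge 1$ (no embedded associated primes) of $\oo_X(nC)/\oo_X(mC)$ coming from maximal Cohen--Macaulayness of the reflexive sheaves $\oo_X(nC)$, and the completeness of $\hat\ff_x\otimes_{\hat\oo_x}\hat\oo_x(nC)$ for the filtration topology via Chevalley's lemma --- are precisely the details the paper leaves implicit in that limit passage, and you have supplied them correctly.
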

\begin{proof}
Let $J_C$ be the ideal sheaf of the subscheme $C$ on $X$. For integers ${n > m}$ we consider a $1$-dimensional closed subscheme $Y_{n-m}=(C, \oo_X/J_C^{n-m}) \subset X$
with the topological space $C$ and the structure sheaf $\oo_X/J_C^{n-m}$. The sheaf
$${\ff_{n,m} = \ff \otimes_{\oo_X} \left( \oo_X(nC)/   \oo_X(mC) \right) }$$
is a coherent sheaf on the scheme $Y_{n-m}$. Now the proof follows from the calculation of the group $H^0 \left( X, \ff_{n,m} \right) = H^0(Y_{n-m}, \ff_{n,m})$ via the adelic complex for the sheaf $\ff_{n,m} $ on the
$1$-dimensional scheme $Y_{n-m}$ (see also formula~\eqref{O_n} and Remark~\ref{another}) and the passing to injective limit on $n$ and projective limit on $m$.
The final complex after the passing to injective limit on $n$ and projective limit on $m$ looks as:
$$
\prod\limits_{1 \le i \le w} K_{C_i}(\ff)  \oplus \mathop{{\prod}'}_{x \in C} B_{x,C}(\ff)  \lrto   \prod\limits_{1 \le i \le w}
\mathop{{\prod}'}_{x \in C_i}  K_{x,C_i}(\ff) \, \mbox{.}
$$
\end{proof}

\section{Case of projective algebraic surface} \label{proj-sur}
Now we restrict ourself to the case of a projective algebraic normal irreducible surface $X$ over a field $k$.
We recall that $C = \bigcup\limits_{1 \le i \le w} C_i$, where $C_i$ are irreducible closed curves on $X$.  We suppose that
$\tilde{C}= \bigoplus\limits_{1 \le i \le w} m_iC_i $  (where $m_i \ge 1$ are certain integers) is   an ample divisor on $X$, i.e. $\oo_X(l\tilde{C})$ is a very ample invertible sheaf on $X$ for certain integer $l \ge 0$.

\begin{prop}   \label{A_C-proj}
There is an isomorphism
$$
H^0(X, j_* j^* \ff) \simeq
\varinjlim\limits _{n} \varprojlim\limits_{m < n}  H^0 \left( X, \ff \otimes_{\oo_X} \left( \oo_X(nC)/   \oo_X(mC)  \right) \right)
 \, \mbox{,}
$$
which comes from the composition of maps of sheaves
$$
j_*j^* \ff \simeq \varinjlim\limits _{n} \ff  \otimes_{\oo_X}  \oo_X(nC)  \lrto
\varinjlim\limits _{n} \varprojlim\limits_{m < n}   \ff \otimes_{\oo_X} \left( \oo_X(nC)/   \oo_X(mC)  \right)  \, \mbox{.}
$$
\end{prop}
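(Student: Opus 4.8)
The plan is to pass to global sections. Since $X$ is Noetherian, $H^0(X,-)$ commutes with filtered direct limits of sheaves, and $\Gamma(X,-)$, being a right adjoint, commutes with inverse limits; together with $j_*j^*\ff\simeq\varinjlim_n\ff\otimes_{\oo_X}\oo_X(nC)$ (formula~\eqref{O_n}), this identifies the left-hand side of the asserted isomorphism with $\varinjlim_n H^0(X,\ff(nC))$ and the right-hand side with $\varinjlim_n\varprojlim_{m<n}H^0(X,\ff_{n,m})$, where I abbreviate $\ff(nC):=\ff\otimes_{\oo_X}\oo_X(nC)$ and $\ff_{n,m}:=\ff\otimes_{\oo_X}(\oo_X(nC)/\oo_X(mC))$; moreover the map in question is $\varinjlim_n$ of the maps $\beta_n\colon H^0(X,\ff(nC))\to\varprojlim_{m<n}H^0(X,\ff_{n,m})$ induced by the quotients $\ff(nC)\to\ff_{n,m}$. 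I would then show that each $\beta_n$ is an isomorphism (proving this after applying $\varinjlim_n$ would already suffice).

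Fix $n$. For $m<n$ the inclusion $\oo_X(mC)\subset\oo_X(nC)$ inside the constant sheaf of rational sections gives a short exact sequence $0\to\ff(mC)\to\ff(nC)\to\ff_{n,m}\to0$ (using that $\ff$ is locally free), hence an exact sequence
\[
0\to H^0(X,\ff(mC))\to H^0(X,\ff(nC))\stackrel{\alpha_{n,m}}{\lrto}H^0(X,\ff_{n,m})\stackrel{\partial_{n,m}}{\lrto}H^1(X,\ff(mC))\,.
\]
The inverse system $\{H^0(X,\ff(mC))\}_{m}$ (over $m\to-\infty$) has injective transition maps and finite-dimensional terms ($X$ is projective over $k$), so it is Mittag-Leffler; and $\varprojlim_m H^0(X,\ff(mC))=0$ because a rational section of $\ff$ lying in $\ff(mC)$ for all $m$ would have valuation $\ge-m$ along each $C_i$ for every $m$, hence vanish. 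Taking $\varprojlim_m$ of the short exact sequences $0\to H^0(X,\ff(mC))\to H^0(X,\ff(nC))\to\Ker\partial_{n,m}\to0$ therefore gives an isomorphism $H^0(X,\ff(nC))\stackrel{\sim}{\lrto}\varprojlim_m\Ker\partial_{n,m}$. Hence $\beta_n$ is injective, and its cokernel embeds into $\varprojlim_m\bigl(H^0(X,\ff_{n,m})/\Ker\partial_{n,m}\bigr)=\varprojlim_m\Image\partial_{n,m}\subseteq\varprojlim_m H^1(X,\ff(mC))$.

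Thus everything reduces to the vanishing $\varprojlim_m H^1(X,\ff(mC))=0$, which is the step I expect to be the main obstacle: the individual groups $H^1(X,\ff(mC))$ need not vanish, since the singularities of $X$ may obstruct a Kawamata-Viehweg-type vanishing for the Weil divisors $mC$. The idea is to compare with genuine tensor powers of an ample line bundle. Set $L:=\oo_X(l\tilde C)$, a very ample invertible sheaf. Because $C$ and $\tilde C$ have the same support and the $m_i$ are positive integers, for $m<0$ one has inclusions of reflexive sheaves $\oo_X(ml\tilde C)\subseteq\oo_X(mC)\subseteq\oo_X\bigl(\lceil m/(l\max_i m_i)\rceil\,l\tilde C\bigr)$ inside the constant sheaf; tensoring with $\ff$ and applying $H^1$, the inverse systems $\{H^1(X,\ff(mC))\}_{m\to-\infty}$ and $\{H^1(X,\ff\otimes_{\oo_X}L^{\otimes c})\}_{c\to-\infty}$ become cofinally interleaved by maps whose composites are the transition maps, hence have isomorphic inverse limits. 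Since $X$ is a normal projective surface it is Cohen-Macaulay, so Serre duality gives $H^1(X,\ff\otimes_{\oo_X}L^{\otimes c})^{\vee}\simeq H^1(X,\ff^{\vee}\otimes_{\oo_X}\omega_X\otimes_{\oo_X}L^{\otimes(-c)})$, which vanishes for $-c\gg0$ by Serre's vanishing theorem. Therefore $\varprojlim_c H^1(X,\ff\otimes_{\oo_X}L^{\otimes c})=0$, so $\varprojlim_m H^1(X,\ff(mC))=0$, so the cokernel of $\beta_n$ is zero. Applying $\varinjlim_n$ and using $\varinjlim_n H^0(X,\ff(nC))=H^0(X,j_*j^*\ff)$ completes the proof; note that the resulting description of $H^0(X,j_*j^*\ff)$ is exactly the $\varinjlim\varprojlim$-object appearing in Proposition~\ref{A_C}.
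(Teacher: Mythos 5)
Your proof is correct and takes essentially the same route as the paper's: both hinge on the fact that a normal projective surface is Cohen--Macaulay, so that Serre duality/vanishing (Hartshorne III.7.6) kills $H^0$ and $H^1$ of $\ff(sC')$ for $s\ll 0$, combined with a cofinality comparison between the Weil divisors $mC$ and the multiples of the ample Cartier divisor $C'=l\tilde C$. The only difference is organizational: the paper replaces the whole IndPro system over $mC$ by the one over $sC'$ at the outset and then reads off $\varprojlim_s H^0\bigl(X,\ff\otimes_{\oo_X}(\oo_X(rC')/\oo_X(sC'))\bigr)\simeq H^0(X,\ff(rC'))$ from the stabilization for $s\ll0$, whereas you keep $C$ throughout and carry out the Mittag-Leffler and $\varprojlim H^1$ bookkeeping explicitly, doing the interleaving only at the level of the $H^1$ groups.
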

\begin{proof}
We denote a  divisor $C'= l \tilde{C}$ such that $\oo_X(C')$ is a very ample invertible sheaf.
It is clear that
$$
\varinjlim\limits _{n} \varprojlim\limits_{m < n}  H^0 \left( X, \ff \otimes_{\oo_X} \left( \oo_X(nC)/   \oo_X(mC)  \right) \right) \simeq
\varinjlim\limits _{r} \varprojlim\limits_{s < r}  H^0 \left( X, \ff \otimes_{\oo_X} \left( \oo_X(rC')/   \oo_X(sC')  \right) \right)  \mbox{,}
$$
because of isomorphism of the corresponding IndPro systems.

A normal surface is a Cohen-Macaulay scheme. Since $C'$ is a very ample divisor, by~\cite[ch.~III, Th.~7.6]{Ha}
we have
$$
H^i(X, \ff(sC'))  =0 \qquad \mbox{when} \qquad i \le 1  \qquad \mbox{and}  \qquad s \ll 0  \, \mbox{.}
$$
Now  from apllication of these equalities to  an exact sequence of sheaves on $X$:
$$
0 \lrto  \ff(sC)  \lrto \ff(rC)   \lrto \ff \otimes_{\oo_X}  \left( \oo_X(rC) / \oo_X(sC)    \right)  \lrto 0 \, \mbox{.}
$$
we obtain an isomorphism
$$
\varinjlim\limits _{r} \varprojlim\limits_{s < r}  H^0 \left( X, \ff \otimes_{\oo_X} \left( \oo_X(rC')/   \oo_X(sC')  \right) \right)
\simeq
\varinjlim\limits _{r}   H^0 \left( X, \ff \otimes_{\oo_X} \oo_X(rC') \right) \, \mbox{.}
$$
Finally we have
$$
\varinjlim\limits _{r}   H^0 \left( X, \ff \otimes_{\oo_X} \oo_X(rC') \right)  \simeq H^0(X, j_* j^* \ff) \, \mbox{.}
$$
\end{proof}

From Propositions~\ref{A_C} and~\ref{A_C-proj} it follows that $$A_C(\ff)  = H^0(X, j_* j^* \ff) \, \subset \, \ff_{\eta}  \, \mbox{,}$$
where $\eta$ is the generic point of $X$. Therefore for an element $a$ from the group $A_C(\ff)$ we have
$$
(\tau - \gamma)(a) = I(a) - J(a)  \, \mbox{,}
$$
where $I(a)$ is the image of the element $a$ into $\prod\limits_{D \subset X, D \nsubset C} \hat{\ff}_{D}$  under the diagonal embedding,
and $J(a)$ is the image of the element $a$ into $\prod\limits_{x \in U}  \hat{\ff}_x$ under the diagonal embedding.\footnote{We recall also that the maps $\tau$ and $\gamma$ were defined by formulas~\eqref{tau} and~\eqref{gamma} correspondingly.} Thus we obtain that
$$
(\tau - \gamma)(A_C(\ff)) \; \subset \; \prod\limits_{D \subset X, D \nsubset C} \hat{\ff}_{D} + \prod\limits_{x \in U}  \hat{\ff}_x  \, \mbox{.}
$$
Therefore from formula~\eqref{kernel} we obtain
\begin{equation}  \label{Kernel-surf}
\Ker \phi = \prod_{D \subset X, D \nsubset C} \hat{\ff}_D  \,
+   \,
\prod_{x \in U}  \hat{\ff}_x   \, \mbox{.}
\end{equation}

Thus, from formula~\eqref{Kernel-surf} and Theorem~\ref{ex-seq}
we deduce a theorem.

\begin{Th}  \label{anal-II}
Le $\tilde{C}= \bigoplus\limits_{1 \le i \le w} m_i C_i$  (with certain integers $m_i \ge 1$) be an ample  divisor on a normal projective irreducible algebraic surface $X$ over a field $k$, where $C_i$ are irreducible curves. Let an open subscheme $U = X \setminus C$, where $ C= \bigcup\limits_{1 \le i \le w} C_i$. For any locally free sheaf $\ff$ on $X$ there is the following exact sequence.
 \begin{multline}  \label{seq_th}
0 \lrto
\frac{\mathop{{\prod}'}\limits_{x \in D, D \nsubset C} \oo_{K_{x,D}}(\ff)}{ \prod\limits_{D \subset X, D \nsubset C} \hat{\ff}_D
+
\prod\limits_{x \in U}  \hat{\ff}_x
 }  \stackrel{\phi}{\lrto}
\frac{\da_X(\ff)}{  \da_{X, 01}(\ff) + \da_{X, 02}(\ff)    }   \stackrel{\varphi}{\lrto}  \\ \stackrel{\varphi}{\lrto}
\frac{ \prod\limits_{ 1 \le i \le w}  \mathop{{\prod}'}\limits_{x \in C_i}   K_{x, C_i}(\ff))}{
 \prod\limits_{1 \le i \le w} K_{C_i}(\ff)   + \mathop{{\prod}'}\limits_{x \in C} B_{x,C}(\ff)   }
 \lrto 0 \, \mbox{.}
\end{multline}
\end{Th}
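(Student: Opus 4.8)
The plan is to read Theorem~\ref{anal-II} off the general exact sequence of Theorem~\ref{ex-seq}: the middle and right-hand terms are literally the same, so the only thing to do is to simplify the denominator $\prod_{D\subset X,\,D\nsubset C}\hat{\ff}_D+\prod_{x\in U}\hat{\ff}_x+(\tau-\gamma)(A_C(\ff))$ of the left-hand group, i.e.\ to compute $\Ker\phi$. Concretely, I would show that under the projectivity and ampleness hypotheses the third summand $(\tau-\gamma)(A_C(\ff))$ is already contained in the first two, so that $\Ker\phi=\prod_{D\subset X,\,D\nsubset C}\hat{\ff}_D+\prod_{x\in U}\hat{\ff}_x$, which is exactly formula~\eqref{Kernel-surf}.

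The first step is to pin down $A_C(\ff)$. Combining Proposition~\ref{A_C} with Proposition~\ref{A_C-proj} gives a canonical isomorphism $A_C(\ff)\simeq H^0(X,j_*j^*\ff)$ compatible with the embeddings of both groups into $\prod_{1\le i\le w}K_{C_i}(\ff)$; in particular $A_C(\ff)$ is realized as the subgroup $H^0(X,j_*j^*\ff)$ of the generic stalk $\ff_{\eta}$. The substance here is Proposition~\ref{A_C-proj}: since $\tilde C=\bigoplus m_iC_i$ is ample, the divisors $rC'$ with $C'=l\tilde C$ very ample are cofinal among the $nC$, so the IndPro limit computing $A_C(\ff)$ may be taken over the $rC'$; then, using that a normal surface is Cohen--Macaulay together with the Serre-type vanishing $H^i(X,\ff(sC'))=0$ for $i\le 1$, $s\ll 0$ (\cite[ch.~III, Th.~7.6]{Ha}), the long exact cohomology sequence of $0\to\ff(sC')\to\ff(rC')\to\ff\otimes_{\oo_X}(\oo_X(rC')/\oo_X(sC'))\to 0$ collapses the inner projective limit and identifies the double limit with $\varinjlim_{r}H^0(X,\ff(rC'))\simeq H^0(X,j_*j^*\ff)$. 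Granting $A_C(\ff)\subset\ff_{\eta}$, an element $a\in A_C(\ff)$ is a single rational section, so the maps $\tau$ and $\gamma$ of~\eqref{tau} and~\eqref{gamma} send $a$ to diagonal images: $(\tau-\gamma)(a)=I(a)-J(a)$ with $I(a)\in\prod_{D\subset X,\,D\nsubset C}\hat{\ff}_D$ and $J(a)\in\prod_{x\in U}\hat{\ff}_x$ the corresponding diagonal images. Hence $(\tau-\gamma)(A_C(\ff))\subset\prod_{D\subset X,\,D\nsubset C}\hat{\ff}_D+\prod_{x\in U}\hat{\ff}_x$, and together with the inclusion already recorded in~\eqref{kernel} this forces the equality~\eqref{Kernel-surf}.

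Finally, substituting~\eqref{Kernel-surf} for $\Ker\phi$ in the exact sequence of Theorem~\ref{ex-seq} produces precisely the exact sequence~\eqref{seq_th}: the map $\phi$ now has the stated source, $\varphi$ and the right-hand quotient are unchanged, and exactness at all three places is inherited verbatim from Theorem~\ref{ex-seq}. I expect the only genuinely nontrivial ingredient to be Proposition~\ref{A_C-proj} — the cohomological collapse of the IndPro double limit, which is where Cohen--Macaulayness of the normal surface and Serre vanishing in degrees $\le 1$ are really used; once that is in hand, the passage from Theorem~\ref{ex-seq} to Theorem~\ref{anal-II} is a formal manipulation with diagonal embeddings.
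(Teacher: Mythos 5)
Your proposal is correct and follows essentially the same route as the paper: it derives Theorem~\ref{anal-II} from Theorem~\ref{ex-seq} by identifying $A_C(\ff)$ with $H^0(X,j_*j^*\ff)\subset\ff_{\eta}$ via Propositions~\ref{A_C} and~\ref{A_C-proj} (with the same cofinality and Serre-vanishing arguments), and then observing that $(\tau-\gamma)(A_C(\ff))$ is absorbed into $\prod\limits_{D\subset X,\,D\nsubset C}\hat{\ff}_D+\prod\limits_{x\in U}\hat{\ff}_x$, which yields formula~\eqref{Kernel-surf}. No gaps.
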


\begin{nt}{\em \label{gap}
As we mentioned in Introduction, Theorem~\ref{anal-II} was formulated
as Theorem~3 in~\cite{OsipPar2}
for  a smooth projective connected algebraic surface $X$, a sheaf $\ff = \oo_X$, and all $m_i=1$. But the proof in~\cite{OsipPar2} was incorrect: it contained gaps. In particular, an additional term $(\tau - \gamma)(A_C(\ff)) $ from Theorem~\ref{ex-seq} was not considered.
}
\end{nt}

From Theorem~\ref{anal-II} we explicitly see that $k$-vector space $\da_{X}(\ff)/ \left(  \da_{X, 01}(\ff) + \da_{X, 02}(\ff)    \right)$
is a linearly compact  $k$-vector space (or a compact topological space when $k$ is a finite field.) Indeed, the first non-zero term in exact sequence~\eqref{seq_th} can be rewritten as
$$
\frac{\mathop{{\prod}'}\limits_{x \in D, D \nsubset C} \oo_{K_{x,D}}(\ff)}{ \prod\limits_{D \subset X, D \nsubset C} \hat{\ff}_D +
\prod\limits_{x \in U}  \hat{\ff}_x
}
\;
\simeq
\;
\frac{ \prod\limits_{D \subset X, D \nsubset C } \left( \left(\frac{\mathop{{\prod}'}\limits_{x \in D}  \oo_{K_{x,D}}}{ \hat{\oo}_D}  \right) \otimes_{\hat{\oo}_D} \hat{\ff}_D \right)}{ \prod\limits_{x \in U} \hat{\ff}_x  }  \, \mbox{,}
$$
and $k$-vector spaces $\hat{\ff}_x$ for any point $x \in X$  and $\left(\mathop{{\prod}'}\limits_{x \in D}  \oo_{K_{x,D}} \right) / \hat{\oo}_D $
for any irreducible curve  $D \subset X$ are linearly compact $k$-vector spaces, see~\cite[Remark~26]{OsipPar2}. The last non-zero term in exact sequence~\eqref{seq_th} can be rewritten analogously and by the same arguments as in  Theorem~4 from~\cite{OsipPar2}. (On a two-dimensional local field and, more generally, on the group $\da_X$  there is the natural topology of inductive and projective limits, first introduced by A.~N.~Parshin in~\cite{P1}  for the case $\df_q((t_1))  \ldots ((t_n))$, see also its properties, e.g., in~\cite[\S~3.2]{GOMS}.)

 At the end we recall that using these and other  calculations, it was suggested in~\cite[\S~14.3]{OsipPar2} an analogy for the first and last non-zero terms of exact sequence~\eqref{seq_th} in the case of the simplest arithmetic surface  $\mathbb{P}^1_{\sdz}$. In particular, the last non-zero term should be
equal to
$$
\mathbb{R}((t)) / \left( \mathbb{Z}((t)) + \mathbb{R}[t^{-1}]   \right)  \, \mbox{.}
$$
This will be justified by explicit calculations with arithmetic adeles  on  arithmetic surfaces in~\cite{Osip1}. (Arithmetic adeles, i.e. adeles on an arithmetic surface  when  fibres over Archimedean points of the base are taken into account, were introduced in~\cite[Example~11]{OsipPar2}. See also applications in~\cite[\S~4]{Osi2016}.)


\noindent Steklov Mathematical Institute of Russsian Academy of Sciences \\
\noindent  8 Gubkina St., Moscow 119991, Russia \\

\medskip
\noindent {\it E-mail:}  ${d}_{-} osipov@mi.ras.ru$

\end{document}